\documentclass[11pt]{amsart}

\usepackage{graphicx}
\usepackage{amssymb, amsthm, amsmath}
\usepackage{xcolor}
\newtheorem{theorem}{Theorem}

\newtheorem{lemma}[theorem]{Lemma}

\newtheorem*{theorem*}{Theorem}{\bf}{\it}
\newtheorem*{proposition*}{Proposition}{\bf}{\it}
\newtheorem*{observation*}{Observation}{\bf}{\it}
\newtheorem*{lemma*}{Lemma}{\bf}{\it}

\theoremstyle{definition}

\definecolor{lev}{rgb}{0.773,0.294,0.549}

\theoremstyle{remark}
\newtheorem{remark}[theorem]{Remark}


\newcommand{\R}{\mathbb R}

\newcommand{\C}{\mathbb C}

\def\XXint#1#2#3{{\setbox0=\hbox{$#1{#2#3}{\int}$ }
\vcenter{\hbox{$#2#3$ }}\kern-.6\wd0}}

\begin{document}
\title[]{ Non-isometric domains with the same 
Marvizi-Melrose invariants}

\author{Lev Buhovsky, Vadim Kaloshin}

\begin{abstract} For any strictly convex planar domain 
$\Omega \subset \R^2$ with a $C^\infty$ boundary 
one can associate an infinite sequence of spectral 
invariants introduced by Marvizi-Merlose \cite{MM}.
These invariants can generically be determined using 
the spectrum of the Dirichlet problem of the Laplace operator. 
A natural question asks if this collection is sufficient to determine 
$\Omega$ up to isometry. In this paper we give 
a counterexample, namely, we present two non-isometric 
domains $\Omega$ and $\bar \Omega$ with the same collection 
of Marvizi-Melrose invariants. Moreover, each domain 
has countably many periodic orbits $\{S^n\}_{n \geqslant 1}$ (resp. 
$\{ \bar S^n\}_{n \geqslant 1}$) of period going to infinity such that
$ S^n $ and $ \bar S^n $ have the same period and perimeter for each $ n $.
 \end{abstract}
\maketitle

Consider a  $C^\infty$ smooth  strictly convex  planar domain $\Omega \subset \mathbb R^2$. 
Let us start by introducing the {\it Length Spectrum} of a domain $\Omega$.
The length spectrum of $\Omega$ is given by the set  of 
lengths of its periodic orbits, counted with multiplicity:
\[
\mathcal  L(\Omega) := \mathbb N 
\{\text{ lengths of closed geodesics in }\Omega\} \cup 
\mathbb N \, |\partial \Omega|,
\]
where $|\partial \Omega|$ denotes the length of the boundary 
of $\Omega$. Generically this collection can be determined 
from the spectrum of the Laplace operator in $\Omega$ 
with Dirichlet boundary condition (similarly for Neumann boundary one): 
\begin{equation}\label{DirichletProblem}
\left\{
\begin{array}{l}
\Delta f = \lambda f \quad \text{in}\; \Omega \\
f|_{\partial \Omega} = 0.\end{array}\right.\\
\end{equation}
From the physical point of view, the eigenvalues $\lambda$'s
are the eigenfrequencies of the membrane $\Omega$ with
a fixed boundary. There is the following relation between 
the Laplace spectrum and the length spectrum (see e.g. 
\cite{AM,PS}). Call the function 
\[
w(t):=\sum_{\lambda_i \in spec \Delta}
\cos (t \sqrt{-\lambda_i}),
\]
the wave trace.
Then, the wave trace $w(t)$  is a well-defined 
generalized function (distribution) of $t$, smooth 
away from the length spectrum, namely, 
\begin{equation}\label{AndersonMelroseformula}
\mbox{sing. \!\!\!\! supp.} \big( w(t) \big)\subseteq  \pm \mathcal L(\Omega) \cup \{0\}.
\end{equation}
So if $l > 0$ belongs to the singular support of this distribution, then there exists 
either a closed billiard trajectory of length $l$, or a closed geodesic of length 
$l$ in the boundary of the billiard table.

Generically, equality holds in \eqref{AndersonMelroseformula}. 
More precisely, if no two distinct orbits have the same length and 
the Poincar\'e map of any periodic orbit is non-degenerate, then the singular 
support of the wave trace coincides with $\pm \mathcal L(\Omega) \cup \{0\}$ (see e.g. 
\cite{PS}). This theorem implies that,  at least for generic domains, one can 
recover the length spectrum from the Laplace one. \\

This relation between periodic orbits and spectral properties of 
the domain, immediately leads to a famous inverse spectral problem: \smallskip 

{\it Can one hear the shape of a drum?}, \smallskip 

\noindent as formulated in a very suggestive way by M. Kac \cite{Kac} (although the problem had been already stated by H. Weyl). 
More precisely, does the spectrum Spec $\Delta$ 
determine $\Omega$ up to isometry?
This question has not been completely solved yet: there 
are negative and positive answers 
(see \cite{Z-1,Z-2}).
  \smallskip 

S. Marvizi and R. Melrose \cite{MM} studied the asymptotics 
of the lengths of $n$--periodic billiard trajectories in a smooth 
strictly convex plane domain as $n \to \infty$. Let $L_n$ be 
the supremum and $l_n$ – the infimum of the perimeters of 
simple billiard $n$-gons. {The following theorem was 
proved in \cite{MM}:}

\begin{theorem*} For any positive integer $k$ we have 
\[
\lim n^k (L_n - l_n)=0 \qquad n\to \infty
\]
for any positive $k$. Moreover, $L_n$ has an asymptotic expansion as $n\to \infty$:
\[
L_n \sim∼ \ell_0 +  \sum_{k=1}^\infty \frac{\ell_k}{n^{2k}},
\]  
where $\ell_0$ is the length of the billiard table and $\ell_k$'s 
are constants, depending on the curvature of the table. 
\end{theorem*}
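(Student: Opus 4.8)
The plan is to put the billiard ball map into a normal form near the boundary (equivalently, near glancing trajectories) and to read off both assertions from the fact that, in a strictly convex table, the billiard dynamics is integrable to infinite order there. Parametrize $\partial\Omega$ by arc length $s\in\R/\ell_0\Z$, $\ell_0:=|\partial\Omega|$, with curvature $\kappa(s)$, and let $\Phi$ be the billiard map on the annulus $(\R/\ell_0\Z)\times(0,\pi)$ in the coordinates $(s,\theta)$, $\theta$ the reflection angle. Its generating function is the chord length $L(s,s')$, with $L(s,s+h)=h-\tfrac{1}{24}\kappa(s)^2h^3+O(h^4)$ as $h\to0^{+}$; the crucial feature is that, after centering at the midpoint, only \emph{odd} powers of $h$ occur, since $L(s,s')=L(s',s)$. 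I would first pass to Lazutkin's parameter $x\in\R/\Z$ defined by $dx\propto\kappa(s)^{2/3}\,ds$ and to a transversal coordinate $y\searrow0$ (a suitable curvature-weighted multiple of $\theta$), in which $\Phi$ becomes a perturbation of the integrable twist $(x,y)\mapsto(x+y,y)$, Lazutkin's choice of $x$ killing the would-be $O(y^{2})$ term in the first component, so $\Phi(x,y)=(x+y+O(y^{3}),\,y+O(y^{4}))$ with smooth remainders.

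Next I would construct the Marvizi--Melrose normal form: a formal first integral $F(x,y)=y+\sum_{j\ge2}F_j(x)y^{j}$ invariant under $\Phi$ modulo a remainder flat at $y=0$, or equivalently an interpolating Hamiltonian $H(x,y)=y+\sum_{j\ge2}h_j(x)y^{j}$ whose time-one flow agrees with $\Phi$ to infinite order along $y=0$. Because the leading dynamics is an integrable twist, the obstruction at each order is a cohomological equation over the circle of the type $u(x+y)-u(x)=(\text{known})$, solved after subtracting the $x$-average, which is absorbed into the next-order correction to $F$ or $H$; iterating produces $F$ and $H$ as formal series in $y$, and one checks that the error is genuinely $O(y^{\infty})$. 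The time-reversal symmetry of the billiard --- traversing a billiard polygon backwards is again a billiard polygon, which on the generating-function side is exactly $L(s,s')=L(s',s)$ --- forces $H$, and the action/length functions below, to be even in $y$.

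The third ingredient is that for $n$ large every simple billiard $n$-gon is $O(1/n)$-close to $\partial\Omega$, hence lies in the normal-form chart: being simple it has rotation number $1/n$, so $\sum_i h_i=\ell_0$ and the average step is $\sim\ell_0/n$, while the twist and reflection law prevent individual angles from being much larger than the average --- an elementary but slightly delicate compactness argument. Such an $n$-gon is a periodic point with $\Phi^{n}(x,y)=(x+1,y)$; since $F$ is invariant modulo $O(y^{\infty})$ and $n\sim1/y$, iterating shows all its vertices satisfy $y_i=\psi(1/n)+O(n^{-\infty})$ for a fixed smooth $\psi$ with $\psi(t)=t+O(t^{2})$, i.e. the whole orbit sits, up to flat error, on the single approximately invariant curve $\{F=\text{const}\}$ pinned by $n$. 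Using the generating function together with the interpolating Hamiltonian, the perimeter $\sum_i L(s_i,s_{i+1})$ equals the action of $H$ around that curve, hence a smooth function $\Lambda$ of the rotation number $1/n$ alone, modulo $O(n^{-\infty})$, and by the parity above $\Lambda(t)=\ell_0+\sum_{k\ge1}\ell_k t^{2k}$ with the $\ell_k$ explicit curvature functionals (the even-powers phenomenon is also visible directly: summing the odd-in-$h$ expansion of $L$ over $\sim n$ vertices of spacing $\sim1/n$ turns odd powers of $1/n$ into even ones). Consequently $L_n=\Lambda(1/n)+O(n^{-\infty})$ and $l_n=\Lambda(1/n)+O(n^{-\infty})$, which yields simultaneously $n^{k}(L_n-l_n)\to0$ for every $k$ and the asymptotic expansion $L_n\sim\ell_0+\sum_{k\ge1}\ell_k/n^{2k}$.

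The main obstacle is the interaction of the second and third steps: one must not merely produce the formal first integral / interpolating Hamiltonian, but control, uniformly in $n$, how far the genuine period-$n$ billiard orbit drifts from the formal invariant curve and how far its perimeter drifts from the formal action --- both must beat every power of $1/n$, which forces one to keep all remainders genuinely flat at $y=0$ while composing the map $\sim1/y$ times. The a priori confinement of all large-period simple orbits to a shrinking neighbourhood of $\partial\Omega$ is a second, more classical, point that nonetheless has to be argued with care.
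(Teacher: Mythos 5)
This theorem is not proved in the paper at all: it is imported verbatim from Marvizi--Melrose \cite{MM}, so there is no internal proof to compare against. That said, your outline is essentially the standard (and correct) dynamical route to this result, and it is consistent with the machinery the paper itself borrows for its own Lemma~\ref{lazutkin-coordinates-orbits}: pass to Lazutkin coordinates where the billiard map becomes $(x,y)\mapsto(x+y+O(y^3),\,y+O(y^4))$, build an interpolating Hamiltonian / formal first integral flat at $y=0$ by solving circle cohomological equations order by order, confine all large-period simple $n$-gons to the glancing region, and identify the perimeter with a smooth function of the rotation number $1/n$ modulo $O(n^{-\infty})$, the evenness in $1/n$ coming from the midpoint symmetry $L(s,s')=L(s',s)$ of the generating function. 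The two points you single out as delicate are indeed the genuine content: (i) the a priori estimate that every simple $n$-gon has all reflection angles $o(1)$ (this uses strict convexity and the fact that a chord with angle bounded below advances the base point by a definite amount, so at most $O(1)$ of the $n$ chords can have non-small angle, and one must then bootstrap); and (ii) the uniform control of the flat remainder under $\sim n$ compositions, which works because an $O(y^{\infty})$ error accumulated $O(1/y)$ times is still $O(y^{\infty})$. Your sketch does not carry out these two steps, but it correctly identifies them and the strategy for each is standard; as a blueprint of the known proof it is sound.
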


This collection $\{\ell_k\}_{k\ge 0}$ is sometimes called 
Marvizi-Melrose spectral invariants. 
These coefficients are closely related to expansion at the origin 
of so-called Mather's $\beta$-function (see \cite{S,T}). 
A natural question is \medskip 

{\it Do Marvizi-Melrose spectral invariants $\{\ell_k\}_{k\ge 0}$ 
determine a strictly convex domain (up to isometry)?}\medskip 

\noindent In this paper we provide a negative answer, namely,
\begin{theorem} \label{thm:main} 
There exist two $C^\infty$ strictly convex planar domain 
$\Omega,\ \Omega' \subset \mathbb R^2$ which are 
non-isometric, but have the same Marvizi-Melrose invariants. Moreover, there is a sequence $q_n\to \infty$
such that for each $n \geqslant 1$ there are periodic orbits of period 
$q_n$ for both domains of the same perimeter.
\end{theorem}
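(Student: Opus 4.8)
The strategy is to build the two domains so that they share a one-parameter family of periodic orbits (say $4$-periodic, or more generally $q$-periodic for a sequence $q_n\to\infty$) whose geometry is rigidly controlled, and then to exploit the known fact that the Marvizi–Melrose invariants $\{\ell_k\}$ are determined by the germ of the curvature-related data along a convex curve — in fact by Mather's $\beta$-function near $\rho=0$, equivalently by the full Taylor expansion of $L_n$. Since the Marvizi–Melrose coefficients are integrals of local curvature invariants of $\partial\Omega$, if we construct $\Omega$ and $\Omega'$ so that their boundaries agree to infinite order at two antipodal points on a common axis of symmetry — or more precisely, so that all the curvature functionals entering the $\ell_k$ coincide — then automatically $\ell_k(\Omega)=\ell_k(\Omega')$ for all $k$, while the domains themselves are globally different.

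First I would fix the symmetry framework: take both $\Omega$ and $\Omega'$ invariant under reflection across the $x$-axis, and design them as perturbations of a fixed ellipse (or circle) supported away from the two axis points $A,B$ where $\partial\Omega$ meets the $x$-axis. The segment $AB$ is a $2$-periodic ("bouncing ball") orbit for both, of the same length if $A,B$ are the same points. To get the sequence $q_n\to\infty$ with matching perimeters, I would arrange that near $A$ and $B$ the two boundaries have the same curvature jet to infinite order; then standard Birkhoff/KAM-type normal form analysis near the bouncing-ball orbit produces, for each rational rotation number $1/q$ close to the elliptic fixed point, a $q$-periodic orbit whose perimeter is given by a convergent expansion in $1/q$ with coefficients that are local in this jet. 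Matching the jets forces all these perimeters to agree for $q=q_n$ large. Equivalently one can use the Lazutkin/Marvizi–Melrose near-boundary normal form: $q$-periodic orbits hugging the boundary have perimeter expansions governed only by curvature data, so infinite-order agreement of the relevant curvature functionals yields exact agreement of perimeters for a full sequence $q_n\to\infty$.

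The key construction step is therefore: produce two closed strictly convex $C^\infty$ curves, not related by isometry, that share all the local curvature invariants responsible for the $\ell_k$ and for the near-boundary periodic-orbit perimeters. I expect the cleanest route is to let the perturbation be a compactly supported bump in arc-length parametrization, placed at a single boundary point (or a symmetric pair), chosen so that adding it changes the global shape but leaves unchanged every integral $\int_{\partial\Omega} P(\kappa,\kappa',\kappa'',\dots)\,ds$ that appears in Marvizi–Melrose — here one uses that the $\ell_k$ are, up to total derivatives, integrals of polynomials in the curvature and its arc-length derivatives, so a perturbation whose support is an interval on which we can prescribe the curvature jet freely can be made to kill the finitely many constraints at each order and, by a diagonal/Borel-summation argument, all of them simultaneously. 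Ensuring the perturbed curve stays strictly convex, closed, and embedded is a routine but necessary check.

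**Main obstacle.** The hard part will be the second step — actually exhibiting a nontrivial curvature deformation that annihilates \emph{all} Marvizi–Melrose invariants at once and simultaneously preserves an infinite family of periodic-orbit perimeters, rather than just finitely many. One must either (i) find an explicit algebraic family (e.g. deforming within a class of curves where the $\beta$-function germ is manifestly invariant, such as curves obtained from a fixed one by a carefully chosen area-preserving-type or "isospectral" move), or (ii) run an iterative scheme: at stage $N$ correct the first $N$ invariants and the first $N$ periodic-orbit constraints by a bump of size $\varepsilon_N$ supported in a shrinking interval, with $\varepsilon_N\to 0$ fast enough that the infinite composition converges in $C^\infty$ while the limit curve is still non-isometric to the base. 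Controlling non-degeneracy (the limit must genuinely differ from $\Omega$ up to rigid motions, and the $\ell_k$-matching must survive the limit) is the delicate analytic point; I would handle it by making the supports of successive bumps disjoint and by tracking the dependence of $\ell_k$ and of the $q$-periodic perimeters on the curve in a fixed $C^m$ norm, so that tail perturbations affect only invariants of index $>N$.
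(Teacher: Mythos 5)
Your proposal has a genuine gap in the ``moreover'' part, and that is the central difficulty of the theorem. You argue that if the two boundaries share the relevant curvature jets (at the bouncing-ball endpoints, or the curvature functionals entering the $\ell_k$), then the perimeters of the $q$-periodic orbits ``agree for $q=q_n$ large'' because they are ``given by a convergent expansion in $1/q$ with coefficients local in this jet.'' This is not correct: both the Birkhoff normal form near an elliptic $2$-periodic orbit and the Lazutkin/Marvizi--Melrose near-boundary expansion are only \emph{asymptotic}. Matching jets (or matching all the $\ell_k$) forces the perimeters of the corresponding $q$-periodic orbits to agree up to an error that is $o(q^{-k})$ for every $k$, but not to agree exactly; indeed the Marvizi--Melrose theorem itself only gives $L_q - l_q = o(q^{-k})$, and two domains with identical expansions can perfectly well have all their actual orbit lengths pairwise distinct. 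Since the statement asserts \emph{exact} equality of periods and perimeters along a sequence $q_n\to\infty$, your mechanism does not deliver the conclusion. Moreover, the periodic orbits of rotation number $1/q$ near an elliptic orbit or near the boundary are global objects (produced by Poincar\'e--Birkhoff or variational arguments); their lengths depend on the whole domain, not only on a jet, so no finite- or infinite-order local matching can pin them down exactly. Your treatment of the first claim (annihilating all $\ell_k$ by an infinite iterative bump scheme while keeping the curve non-isometric) is plausible in outline but leaves the key analytic step --- solving infinitely many nonlinear integral constraints simultaneously --- unproven, and it is far harder than necessary.

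The paper circumvents both issues by a single, entirely different device: the two boundaries are concatenations of the \emph{same} finite collection of symmetric arcs (``building blocks''), glued in two different cyclic orders. Because each $\ell_k$ is an integral over the boundary of a local expression in the curvature and its derivatives, any rearrangement of the blocks leaves all of them unchanged, so the first claim requires no constraint-killing at all. For the exact perimeter equality, the blocks are perturbed by an iterated, rapidly decaying scheme (so the limit is still $C^\infty$ and the two gluings remain non-congruent) so that for a sequence of angles $\theta_n\to 0$ the billiard trajectory entering each block at angle $\theta_n$ passes through the block's symmetry point and hence exits at the same angle $\theta_n$; the key Lemma~\ref{lm:match}, proved via Lazutkin coordinates and an intermediate value argument on the ``Lazutkin perimeter'' of a block, guarantees such perturbations exist. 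The closed orbit in each domain is then the concatenation of the very same trajectory segments in a different order, so periods and total lengths agree \emph{exactly}. To salvage your approach you would need to replace jet-matching by some mechanism forcing exact, rather than asymptotic, coincidence of orbit lengths; the rearrangement idea is precisely such a mechanism.
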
 \medskip 

We note that originally Marvizi-Melrose \cite{MM} 
derived their invariants $\{\ell_k\}_{k\ge 0}$ as integrated
quantities. If $s$ is the length parametrization of the boundary 
and $\rho(s)$ is its radius of curvature, then 
\[
\ell_1=-2\int_0^{\ell(\partial \Omega)}\rho^{2/3}(s)ds
\]
\[
\ell_2=\frac{1}{1080}\int_0^{\ell(\partial \Omega)}
(9\rho^{4/3}(s)+8\rho^{-8/3}\dot \rho^2(s))\,ds
\]
and so on.

We construct domains $\Omega$ and $\Omega'$
using the same ``building blocks'', namely, there is 
a partition of the boundary of both domains such that parts 
are isometric (see Figure 1 below). Since these invariants are 
integrated quantities of products of fractional powers of $\rho$ 
and 
its derivatives, the first part of our result can be derived from 
the derivations of Marvizi-Melrose \cite{MM}. We, however, 
feel that our ``moreover'' construction in Theorem \ref{thm:main} is interesting by itself.

\section{Construction of non-isometric domains with the same Marvizi-Melrose spectral invariants}
Our convex billiard tables $\Omega$ and $\Omega'$ will consist of the same ``building blocks'', 
which are ``glued'' together in different order. To be more precise, these building blocks will 
be smooth curves $ \gamma : [0,a] \rightarrow \mathbb{R}^2 $, and we will require 
that such $ \gamma $ satisfies the following:

\begin{enumerate}
\item $ \gamma $ is regular, simple and non-closed.
\item $ \gamma $ is symmetric with respect to the line passing through $ \gamma(a/2) $ and normal to 
$ \gamma $, that is, denoting by $ R_\gamma : \R^2 \rightarrow \R^2 $ the reflection of $ \R^2 $ with respect to the line passing 
through $ \gamma(a/2) $ and orthogonal to $ \gamma $ at $ \gamma(a/2) $, we have $ R_\gamma (\gamma(t)) = \gamma(a-t) $ for all $ t \in [0,a] $.

\item $ \arg \gamma'(t) $ is monotone increasing $ (\text{mod} \,\, 2\pi) $ with a positive speed, when $ t \in [0,a] $ (for given $ z \in \R^2 \setminus \{ (0,0) \} $, the notation $ \theta = \arg z $ means that $ z = (r \cos \theta, r \sin \theta) $ for some some $ r > 0 $).
\end{enumerate}

Given building blocks $ \gamma_1 : [0,a_1] \rightarrow \R^2 $ and $ \gamma_2 : [0,a_2] \rightarrow \R^2 $, we define their gluing to be a curve $ \gamma : [0,a_1 + a_2] \rightarrow \R^2 $ defined as follows. First, let $ T $ be the orientation preserving isometry of $ \R^2 $ such that for $ \tilde{\gamma}_2 := T \circ \gamma_2 $ we have $ \tilde{\gamma}_2(0) = \gamma_1(a_1) $ and $ \tilde{\gamma}_2'(0) = \gamma_1'(a_1) $. Then we define $ \gamma $ by $ \gamma(t) = \gamma_1(t) $ for $ t \in [0,a_1] $, and $ \gamma(t) = \gamma_2(t-a_1) $ for $ t \in [a_1,a_1+a_2] $. We will use the notation $ \gamma = \gamma_1 \sharp \gamma_2 $. Notice that $ \gamma $ does not have to be $ C^\infty $-smooth in general. However, in all examples that we will be considering below, this will always be the case.

%

We will call a smooth simple closed curve bounding a smooth strictly convex domain {\em a billiard table boundary}. Given a building block $ \gamma $ we can think of it as a ``wall'', with respect to which we can play billiard. We say that an angle $ \theta \in (0,\frac{\pi}{2}) $ {\em matches} $ \gamma $ if the billiard trajectory starting at $ \gamma(0) $ and making the angle $ \theta $ with $ \gamma $ at $ \gamma(0) $, {passes through 
$ \gamma(a/2) $ (and in particular, arrives to $ \gamma(a) $).}
More precisely, there exist 
$$
 t_0 = 0 < t_1 < \ldots < t_{2p} = a, 
$$ 
such that 
$$ 
\angle (\gamma(t_i) - \gamma(t_{i-1}), \gamma'(t_i)) =  \angle (\gamma'(t_i), \gamma(t_{i+1}) - \gamma(t_{i})) 
$$ 
for $ i = 1,2, \ldots, 2p-1 $, we have $ t_p = a/2 $ and 
$ \angle (\gamma'(t_0), \gamma(t_1) - \gamma(t_{0})) = \theta $. 
Clearly, in this case we also have $ \angle (\gamma(t_{2p}) - \gamma(t_{2p-1}),\gamma'(t_{2p})) = \theta $.

\begin{figure}[h!]
\centering
\includegraphics[scale=0.8]{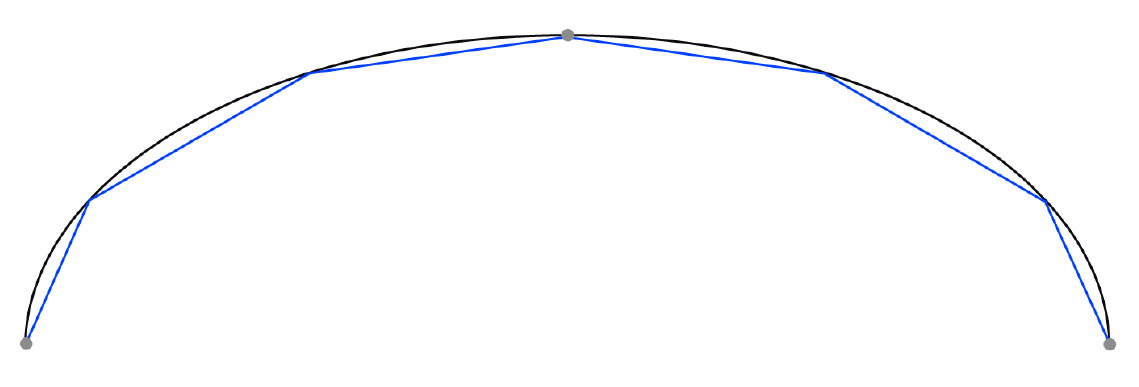}
\caption{\label{fig:rearranging-bb}} 
A billiard ball trajectory matching a building block.
\end{figure}

Assume that we have building blocks $ \gamma_1, \ldots, \gamma_n $, such that $ \gamma = \gamma_1 \sharp \cdots \sharp \gamma_n $ is a billiard table boundary. Then, for any angle $ \theta $ which matches each $ \gamma_k $, we obtain a closed billiard trajectory for $ \gamma $, which starts at $ \gamma(0) $ with angle $ \theta $. Clearly, if we have a permutation $ \gamma_{k_1}, \gamma_{k_2}, \ldots, \gamma_{k_n} $ of our building blocks, such that $ \tilde \gamma := \gamma_{k_1} \sharp \cdots \sharp \gamma_{k_n} $ is a billiard table boundary, then of course, we also obtain a closed billiard trajectory for $ \gamma $, which starts at $ \gamma(0) $ with angle $ \theta $.

The idea of our example is to find building blocks $ \gamma_1, \ldots, \gamma_n $ and a permutation $ k_1, \ldots , k_n $ of the indices $ 1, \ldots, n $, such that $ \gamma =  \gamma_1 \sharp \cdots \sharp \gamma_n $ and $ \tilde \gamma = \gamma_{k_1} \sharp \cdots \sharp \gamma_{k_n} $ are {\em different} (i.e. non-congruent) billiard table boundaries, and such that for an infinite decreasing sequence $ \theta_1, \theta_2, \ldots \in (0,\frac{\pi}{2}) $ of angles, converging to $ 0 $, each $ \theta_k $ matches each $ \gamma_j $. Then from the mentioned above, it follows that $ \gamma $ and $ \tilde \gamma $ admit an infinite sequence of pairs of simple closed billiard trajectories $ \tau_1, \tau_2, \ldots $ and $ \tilde \tau_1, \tilde \tau_2, \ldots $, such that $ \tau_k $ has the same length and the same number 
of bouncing points as $ \tilde \tau_k $, for each $ k $. { In that 
case, the Marvizi-Melrose invariants of $ \gamma $ and 
$ \tilde \gamma $ coincide, since they are defined by 
the asymptotic behaviour of lengths of simple billiard $ n $-gons.}

To construct such building blocks, we start with some $ n \geqslant 4 $ and initial collection $ \gamma_1, \ldots, \gamma_n $ of building blocks and a permutation $ k_1, \ldots , k_n $ such that $ \gamma_1 \sharp \cdots \sharp \gamma_n $ and $ \gamma_{k_1} \sharp \cdots \sharp \gamma_{k_n} $ are 
{\em non-congruent} billiard table boundaries. To obtain an example of such collection of building blocks and a permutation, one can simply look first at $ \gamma_k : [0,\frac{2\pi}{n}] \rightarrow \mathbb{R}^2 \cong \C $, 
$\gamma_k(t) = e^{it + 2\pi k i/n} $, and take any nontrivial permutation $ k_1, \ldots, k_n $ which is not of 
the form $ k_\ell = \ell + a\, (\text{mod }n)$ or $ k_\ell = a-\ell\, (\text{mod }n)$. Of course, in this case 
$\gamma_1 \sharp \cdots \sharp \gamma_n $ and 
$  \gamma_{k_1} \sharp \cdots \sharp \gamma_{k_n} $ are both congruent to the unit circle, 
but if we slightly perturb each $ \gamma_k $ on a compact subset of $ (0, \frac{2\pi}{n}) $ (keeping it to be 
a building block), then one can achieve the non-congruence of $ \gamma_1 \sharp \cdots \sharp \gamma_n $ 
and $ \gamma_{k_1} \sharp \cdots \sharp \gamma_{k_n} $.

\begin{figure}[h!]
\centering
\includegraphics[scale=0.6]{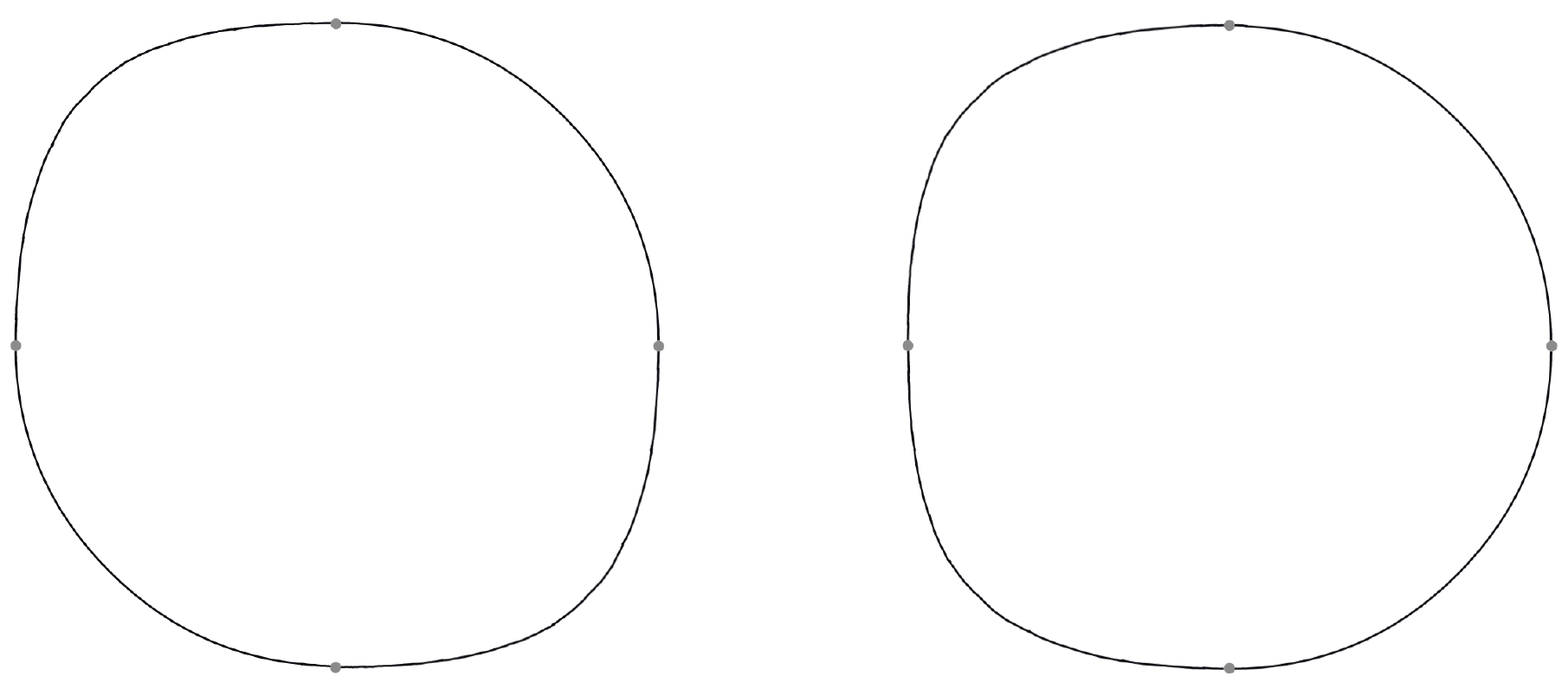}
\caption{\label{fig:rearranging-bb}} 
On the left: $ \gamma_1 \sharp \gamma_2 \sharp \gamma_3 \sharp \gamma_4 $. On the right: $ \gamma_1 \sharp \gamma_3 \sharp \gamma_2 \sharp \gamma_4 $.
\end{figure}

\begin{remark}
If in general,  $ \gamma_1, \ldots, \gamma_n $ are building blocks, $ \gamma_k : [0,a_k] \rightarrow \R^2 $, such that $ \gamma_1 \sharp \cdots \sharp \gamma_n $ is a billiard table boundary, and if we perturb each $ \gamma_k $ on a compact subset of $ (0,a_k) $ while keeping it being a building block, then $ \gamma_1 \sharp \cdots \sharp \gamma_n $ remains to be a billiard table boundary.
\end{remark}

On the next step we make infinitely many small steps on each of which we further perturb each $ \gamma_k $, such that the sizes of the perturbations decay very fast,
so that in the limit we obtain $ C^\infty $-smooth building blocks as well. We will use the following lemma:

\begin{lemma}\label{lm:match}
Let $ \gamma: [0,a] \rightarrow \R^2 $ be a building block, and let $ [b,c] \subset (0,a/2) $ be a closed interval. Then for any small enough angle $ \theta \in (0,\pi /2) $ one can find an arbitrarily $ C^\infty $-small perturbation (meaning that the size of the perturbation converges to $ 0 $ as $ \theta \rightarrow 0 $) $ \tilde \gamma $ of $ \gamma $ on $ [b,c] \cup [a-c,a-b] $, such that $ \tilde \gamma $ is a building block, and such that $ \theta $ matches $ \tilde \gamma $. 
\end{lemma}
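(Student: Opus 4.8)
The plan is to construct the matching billiard trajectory for $\tilde\gamma$ directly, using the symmetry of a building block to cut the work in half. Fix a small $\theta\in(0,\pi/2)$. The billiard trajectory inside the (not-necessarily-closed) wall $\gamma$ starting at $\gamma(0)$ with angle $\theta$ bounces at points $\gamma(s_0=0), \gamma(s_1), \gamma(s_2), \dots$; since $\arg\gamma'$ is strictly monotone and the chords are short for small $\theta$, the step $s_{j+1}-s_j$ is of order $\theta$ and the number of bounces needed to traverse $[0,a/2]$ is of order $1/\theta$. In general $\gamma(a/2)$ will not be a bounce point: the trajectory will enter the interval $(a-c, a/2)\cup(a/2, c)$ (by the reflection symmetry it suffices to look near $a/2$ from the left) at some bounce $\gamma(s_{j_0})$ with $s_{j_0}\in(b,c)$ once $\theta$ is small enough, because the stepsize is $O(\theta)$ and $[b,c]$ has positive length, so some bounce lands in $[b,c]$. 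The idea is then to perturb $\gamma$ on $[b,c]$ (and, symmetrically, on $[a-c,a-b]$) so that, \emph{after} the first $j_0-1$ bounces — which are unchanged, since they happen in $(0,b)$ — the trajectory is corrected to pass exactly through $\gamma(a/2)$ with the normal direction there.

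To implement this I would argue as follows. Run the unperturbed trajectory until the last bounce $\gamma(s_{j_0-1})$ before entering $[b,c]$; this point and the incoming/outgoing directions there depend only on $\gamma|_{[0,b]}$, hence are fixed. Now I want to choose a small perturbation of $\gamma$ supported in $[b,c]$ so that a billiard trajectory coming in from $\gamma(s_{j_0-1})$ reaches $\gamma(a/2)$ and hits it perpendicularly (equivalently, by the reflection symmetry built into the gluing, so that the whole trajectory closes up symmetrically and $\angle(\gamma'(0),\gamma(s_1)-\gamma(0))=\theta$ is preserved on the other side). This is a two-parameter target (position along the curve where we want to arrive, and the condition of arriving perpendicularly at $a/2$), and we have infinitely many degrees of freedom in the perturbation, so the natural tool is a quantitative inverse/implicit function theorem. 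Concretely, I would parametrize perturbations by moving one bounce point, say bring the trajectory from $\gamma(s_{j_0-1})$ to an adjustable point $\gamma(\sigma)$ with $\sigma$ near $s_{j_0}$, then require that the reflection law at $\gamma(\sigma)$ (which we are free to reshape, since $\sigma\in[b,c]$ and we may prescribe the tangent direction $\arg\tilde\gamma'(\sigma)$ there) sends the ball straight to $\gamma(a/2)$; prescribing both $\sigma$ and the tangent at $\sigma$ is exactly two parameters, enough to kill the two-dimensional discrepancy. One checks the relevant Jacobian is nondegenerate (it is, essentially, the derivative of the billiard map, which is invertible for transverse reflections), and that the size of the required modification of $\tilde\gamma$ is controlled by the size of the discrepancy, which is $O(\theta)$; hence the perturbation is $C^\infty$-small and tends to $0$ as $\theta\to0$. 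One then smooths the prescribed tangent data on $[b,c]$ into an actual $C^\infty$ building block (monotonicity of $\arg\tilde\gamma'$ and regularity survive a small perturbation), and extends symmetrically to $[a-c,a-b]$ using $R_\gamma$; the resulting $\tilde\gamma$ is a building block matched by $\theta$.

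The main obstacle I anticipate is bookkeeping the \emph{uniformity} of all estimates as $\theta\to0$: the number of bounces in $[0,a/2]$ grows like $1/\theta$, so one must be careful that the trajectory up to $\gamma(s_{j_0-1})$ really is unaffected (this is automatic if the perturbation's support $[b,c]$ genuinely lies strictly before the bounces we are allowed to touch, but one must confirm that for small $\theta$ exactly one bounce — or a controlled number — falls in $[b,c]$, and that the ``arrival error'' at $\gamma(a/2)$ that must be corrected is indeed $O(\theta)$ and not larger), together with verifying that the correction mechanism (reshaping one reflection) has a Jacobian bounded away from zero uniformly in $\theta$. A secondary technical point is ensuring the perturbed arc remains strictly convex with $\arg\tilde\gamma'$ strictly increasing; since we only prescribe a $C^\infty$-small change of the tangent direction on a compact subinterval, this follows by choosing $\theta$ small, but it should be stated carefully. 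Everything else — the reflection-law computations, the smoothing, the symmetric extension — is routine.
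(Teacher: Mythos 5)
Your overall reduction (exploit the symmetry $R_\gamma$ so that it suffices to make the orbit from $\gamma(0)$ with angle $\theta$ hit $\gamma(a/2)$, and perturb only on $[b,c]\cup[a-c,a-b]$) agrees with the paper, but the correction mechanism you propose has genuine gaps. First, the target is one condition, not two: ``$\theta$ matches $\tilde\gamma$'' only requires that $\tilde\gamma(a/2)$ be a bouncing point; the symmetric continuation past $a/2$ is then automatic from the reflection law together with the $R_\gamma$-symmetry of the block. Requiring in addition that the orbit arrive \emph{perpendicularly} at $\gamma(a/2)$ is not only unnecessary but wrong: a chord hitting $\gamma(a/2)$ along the normal is reflected back onto itself, so the ball retraces its path and never reaches $\gamma(a)$. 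This miscount of the codimension is what pushes you into a two-parameter surgery at a single reflection. Second, and more seriously, ``reshaping one reflection'' is not available in the admissible class of perturbations. The bounce points have spacing $O(\theta)$ in arclength, so the \emph{fixed} interval $[b,c]$ contains $\Theta(1/\theta)$ of them, not ``one or a controlled number''; any perturbation supported in $[b,c]$ affects all of them, whereas a perturbation localized near a single bounce (support of length $O(\theta)$) that tilts the tangent by the required amount $\sim\theta^2$ has $k$-th derivatives of size $\theta^{2-k}$ and is therefore not $C^\infty$-small. Hence the two parameters $\bigl(\sigma,\arg\tilde\gamma'(\sigma)\bigr)$ do not parametrize the effect of an admissible perturbation on the orbit, and the Jacobian you propose to invert is not defined.

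What actually governs the orbit after it crosses $[b,c]$ is the cumulative effect of all $\Theta(1/\theta)$ altered reflections, and the paper identifies the correct effective parameter: in Lazutkin coordinates the billiard map is $(x,y)\mapsto\bigl(x+y+O(y^3),\,y+O(y^4)\bigr)$, so by Lemma \ref{lazutkin-coordinates-orbits} the number of bounces needed to traverse an arc is asymptotically its Lazutkin length divided by $y_0$, \emph{uniformly} over the $\sim 1/\theta$ iterates. The paper therefore takes a one-parameter family $\gamma_\delta$ supported in $[b,c]\cup[a-c,a-b]$ that varies the Lazutkin perimeter, and finds the matching $\delta$ by an intermediate-value/discontinuity argument applied to the integer-valued bounce count $k(\delta,y_0)$; this also sidesteps the fact that ``the bounce nearest $a/2$'' is not a continuous function of the perturbation (its index jumps), so a naive implicit function theorem does not apply directly. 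Your proposal contains no substitute for this normal form: without it, the claims that the angle stays comparable to $\theta$ over $1/\theta$ bounces, that the arrival discrepancy is $O(\theta)$, and that your Jacobian is uniformly invertible are unsubstantiated, and these uniform estimates are precisely the content of the proof.
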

\begin{proof} We defer a proof to Section \ref{sec:Lazutkin}.
\end{proof}

Let us describe more precisely this perturbation scheme. Fix some small enough $ \epsilon > 0 $. At the first step, by the lemma, one can find a small $ \theta_1 \in (0,\pi/2) $, such that after a small perturbation $ \tilde \gamma_k $ of each $ \gamma_k $ on a compact subset of $ (0, \frac{2\pi}{n}) $, $ \theta_1 $ matches each $ \gamma_k $. We can assume that $ \| \gamma_k - \tilde \gamma_k \|_{C^0} < \epsilon $. Now re-define each $ \gamma_k $ to be $ \tilde \gamma_k $, and pass to the second step.

Now assume that we have made $ m $ steps, and have already obtained some angles $ \theta_1,\theta_2,\ldots,\theta_m \in (0,\pi/2) $ such that each $ \theta_j $ matches each $ \gamma_k $. Let us describe the perturbation that we make on the step $ (m+1) $. For each $ 1 \leqslant k \leqslant n $, and for each $ 1 \leqslant j \leqslant m $, let $ \tau_{jk} $ be the billiard trajectory with respect to the ``wall'' $ \gamma_k $, which starts at $ \gamma_k(0) $ at the angle $ \theta_j $ with $ \gamma_k'(0) $. Now for every $ k $, look at all the bouncing points of all $ \tau_{kj} $, $ 1 \leqslant j \leqslant m $, and choose a closed interval $ [b_k,c_k] \subset (0,\pi / n) $, such that 
$ \gamma_k([b_k,c_k] \cup [2\pi / n - c_k, 2\pi / n - b_k]) $ does not contain any of these bouncing points. Then, by the lemma, for each $ k $ there exists a small perturbation $ \tilde \gamma_k $ of $ \gamma_k $ on $ [b_k,c_k] \cup [2\pi / n - c_k, 2\pi / n - b_k] $, with $ \| \gamma_k - \tilde \gamma_k \|_{C^{m}} < \epsilon / 2^m $, such that for a small angle $ \theta_{m+1} \in (0,\theta_m) $, $ \theta_{m+1} $ matches $ \gamma_k $ for every $ k $. Now re-define each $ \gamma_k $ to be $ \tilde \gamma_k $, and pass to the next step.

Note that the sizes of the perturbations decay very fast, so that our changing collection $ \gamma_1, \ldots , \gamma_n $ of building blocks, converges to some limiting collection of building blocks, which we again denote by $ \gamma_1, \ldots , \gamma_n $. Of course, by our construction procedure, now each $ \theta_j $ matches each $ \gamma_k $. Moreover, since on each step $ m $, the $ C^0 $ distance between the initial $ \gamma_k $ and the perturbed $ \gamma_k $ is smaller than $ 2\epsilon/2^m $, we conclude that the $ C^0 $ distance between each starting $ \gamma_k $ (which we had before performing the perturbation scheme) and the limiting $ \gamma_k $ is less than $ 2\epsilon $. Therefore, if $ \epsilon $ is small enough, then for the limiting building blocks $ \gamma_1, \ldots , \gamma_n $, we still get that $ \gamma_1 \sharp \cdots \sharp \gamma_n $ and $ \gamma_{k_1} \sharp \cdots \sharp \gamma_{k_n} $ are not congruent.

\subsection{A proof of Lemma \ref{lm:match}}
\label{sec:Lazutkin}

Let $\Omega$ be a strictly convex domain; recall that $s$ denotes the arc-length parametrization of 
$\partial \Omega$  and denote with $\rho(s)$ its radius of curvature at $s$. Observe that if 
$\Omega$ is $C^r$, then $\rho$ is $C^{r-2}$.
Define the \emph{Lazutkin parametrization} of the boundary:
\begin{align}
  \label{eq:Lazutkin}%
  x(s) &= C_\Omega\,\int_0^s
         \,\rho(\sigma)^{-2/3}\ d\sigma, &\text{ where }\ C_\Omega
  &=\left[\int_0^{\ell_{\partial\Omega}}\rho(\sigma)^{-2/3}d\sigma\right]^{-1}.
\end{align}
We call \emph{the Lazutkin map} the following change of
variables:
\begin{align} \label{eq:Lazutkin-map}%
  \Psi_L &:(s,\varphi)\mapsto (\,x=x(s), y(s,\varphi)=4C_\Omega
         \,\rho(s)^{1/3} \sin (\phi/2)\,).
\end{align}

Consider now the billiard map in Lazutkin coordinates $f_L = \Psi_L\circ f\circ\Psi_L^{-1}$; 
then $f_L$ has the following form (see e.g.~\cite[(1.4)]{Lazutkin}):
\begin{align}
  \label{Lazutkin-map}
  f_L : (x,y)&\to
               (x+y+y^3g(x,y),y+y^4h(x,y)),
\end{align}
where $g$ and $h$ can be expressed analytically in terms of
derivatives of the curvature radius $\rho$ up to order $3$:
hence, if $\Omega$ is $C^r$, $g,h$ are $C^{r-5}$. In the case of $C^\infty$ domains
all functions stay $C^\infty$. We need the following 

\begin{lemma}\label{lazutkin-coordinates-orbits}
  Let $\Omega$ be a $C^5$ strictly convex domain; $\gamma \subset \partial \Omega$ 
  be a connected closed segment of the boundary. For $k,N\in \mathbb Z,\ N>2$,   let 
  $(x_k,y_k) = f_L^k(x_0,y_0),$  and $\{x_k\}_{k=0}^ N \subset \gamma$.  Then there exists 
  $C>0$ depending on $\|\rho\|_{C^3}$ and independent of $N$, such that for {small enough $ y_0 $} we have
{
  \begin{align}\label{e_estimateOnLazutkin-orbits}
    \left|y_k- y_0 \right|&< \frac {C}{N^3},&
   \left|\tilde
      x_k-\tilde x_0-ky_0\right|&<\frac {C}{N^2}.
  \end{align}
} 
for any $0 < k \le N$.
\end{lemma}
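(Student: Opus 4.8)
## Proof Strategy for Lemma \ref{lazutkin-coordinates-orbits}

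The plan is to exploit the quasi-integrable normal form \eqref{Lazutkin-map} of the billiard map in Lazutkin coordinates, treating $y$ as a slow variable and $x$ as a fast (rotating) variable. First I would set up the iteration: writing $(x_{k+1},y_{k+1}) = f_L(x_k,y_k)$, the second component gives $y_{k+1} - y_k = y_k^4 h(x_k,y_k)$, so that telescoping yields $y_k - y_0 = \sum_{j=0}^{k-1} y_j^4 h(x_j,y_j)$. The point is that under the hypothesis $\{x_k\}_{k=0}^N \subset \gamma$ — the orbit stays inside a fixed boundary arc while making $N$ steps of size roughly $y_0$ in the $x$-variable — one must have $y_0 = O(1/N)$: indeed, from the first component $x_{k+1} - x_k = y_k + y_k^3 g(x_k,y_k)$, and if all $y_j$ remain comparable to $y_0$, then $x_N - x_0 \approx N y_0$, which is bounded by $|\gamma|$ (measured in Lazutkin length, which is $\le 1$), forcing $y_0 \lesssim 1/N$. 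So the length-constraint hypothesis is exactly what converts the a priori estimate into the stated $1/N$-powers.

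The key step is a bootstrap/continuity argument to control the $y_j$ uniformly. I would introduce $M := \sup_{\gamma \times \{|y|\le y_0\}}(|g| + |h|)$, which is bounded in terms of $\|\rho\|_{C^3}$ as stated in the text, and show by induction on $k$ that $|y_k - y_0| \le 2 M y_0^4 k$ as long as this quantity stays $\le \tfrac12 y_0$ (which holds once $y_0$ is small enough relative to $1/N$, using $k \le N$). From the telescoped sum, $|y_k - y_0| \le \sum_{j=0}^{k-1} |y_j|^4 |h| \le M \sum_{j=0}^{k-1}(y_0 + \tfrac12 y_0)^4 \le M k \cdot 2 y_0^4$ for the induction to close, which is the crude version of the first inequality in \eqref{e_estimateOnLazutkin-orbits}: since $y_0 \le C'/N$ we get $|y_k - y_0| \le 2M k (C'/N)^4 \le 2M C'^4/N^3$, giving the first bound. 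For the second bound, I would telescope the first component: $x_k - x_0 = \sum_{j=0}^{k-1}(y_j + y_j^3 g(x_j,y_j)) = k y_0 + \sum_{j=0}^{k-1}(y_j - y_0) + \sum_{j=0}^{k-1} y_j^3 g(x_j,y_j)$. The first error sum is bounded by $k \cdot C/N^3 \le C/N^2$ using the first estimate, and the second by $k \cdot M (2y_0)^3 \le M k \cdot 8 (C'/N)^3 \le 8 M C'^3/N^2$, so $|x_k - x_0 - k y_0| < C/N^2$ after adjusting constants. (Here I am reading the "$\tilde x_k - \tilde x_0 - k y_0$" in the statement as "$x_k - x_0 - k y_0$"; if $\tilde x$ denotes a normalization it only changes constants.)

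The main obstacle is making the bootstrap genuinely uniform in $N$: one has to verify that the smallness threshold for $y_0$ (equivalently, the requirement "for small enough $y_0$") can be chosen depending only on $\|\rho\|_{C^3}$ and the arc $\gamma$, not on $N$. This works because the inductive inequality $2 M y_0^4 k \le \tfrac12 y_0$ reduces to $y_0^3 \le (4 M k)^{-1}$, and combined with the forced relation $y_0 \lesssim 1/N$ and $k \le N$ this is automatically satisfied once $y_0$ is below a fixed ($N$-independent) constant — the two constraints are consistent precisely in the regime $y_0 \sim 1/N$. A secondary subtlety is that $g,h$ and hence $M$ must be evaluated on a slightly enlarged neighbourhood of $\gamma$ in $x$ and on $|y| \le 2 y_0$; since the orbit's excursion in $x$ is $O(1/N) \cdot N = O(1)$ we do stay in a fixed compact set where \eqref{Lazutkin-map} is valid and $g,h$ are $C^\infty$ (here $C^5$ regularity of $\Omega$ suffices to have $g,h \in C^0$ with the needed bounds). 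Everything else is routine telescoping and geometric-series bookkeeping.
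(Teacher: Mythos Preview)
The paper does not actually prove this lemma: it simply cites \cite{ADK} for the periodic-orbit case and remarks that the same argument goes through for orbits confined to an arc. Your telescoping/bootstrap argument from the Lazutkin normal form \eqref{Lazutkin-map} is exactly the standard proof one finds in that reference, so in substance you are reproducing what the paper defers to \cite{ADK}.

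One point deserves to be made explicit, since your write-up is slightly circular as stated. You invoke the ``forced relation $y_0 \lesssim 1/N$'' to close the induction $|y_k - y_0| \le \tfrac12 y_0$, but that relation itself comes from summing $x_{j+1}-x_j \gtrsim y_0$, which already presupposes $y_j \sim y_0$. The clean way to break the loop is to argue by contradiction: let $k^*\le N$ be the first index with $|y_{k^*}-y_0| > \tfrac12 y_0$. For $j<k^*$ one has $y_j\in[\tfrac12 y_0,\tfrac32 y_0]$, hence $x_{k^*}-x_0 \ge c\,k^* y_0$; since $x_{k^*}\in\gamma$ by hypothesis, this forces $k^* \le C_\gamma/y_0$. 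Then the telescoped bound gives $|y_{k^*}-y_0| \le M k^* (3y_0/2)^4 \le C' y_0^3$, which is $\le \tfrac12 y_0$ once $y_0$ is below a threshold depending only on $M$ and $|\gamma|$, a contradiction. This makes the smallness condition on $y_0$ genuinely independent of $N$, after which your two telescoping estimates go through verbatim. Your reading of $\tilde x_k$ as $x_k$ is also correct; it is a typographical artefact in the statement.
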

 
This lemma is proven for periodic orbits in \cite{ADK}, but the same proof applies to orbits 
glancing only at a part of the boundary of a $C^5$ strictly convex domain. 

Since a building block is symmetric, 
it is sufficient to construct 
a symmetric perturbation such that an orbit emanating from $x_0=\gamma(0)$ with small angle 
$\theta >0$ will hit the symmetry point $\gamma(a/2)$.  


Consider a smooth variation $ \gamma_\delta : [0,a] \rightarrow \R^2 $ of $ \gamma $ on $ [b,c] \cup [a-c,a-b] $, when $ \delta \in (-\delta_0,\delta_0) $, such that each $ \gamma_\delta $ is a building block, $ \gamma_0 = \gamma $, and moreover the Lazutkin perimeter of $ \gamma_\delta $ is non-constant as a function of $ \delta $ on any neighbourhood of $ \delta = 0 $ (in a sense, the family $ \gamma_\delta $ varies the Lazutkin perimeter). By Lemma \ref{lazutkin-coordinates-orbits} and by the intermediate value theorem, for small enough $ y_0 $, one can always find $ \gamma_\delta $ on which we have $ x_0 = \gamma_\delta(0) $ and $ x_N = \gamma_\delta(a/2) $ for some $ N $ (for convenience, one may consider a smooth family of strictly convex domains $ \Omega_\delta $ such that $ \gamma_\delta $ is a part of $ \Omega_\delta $). Moreover, we may choose $ \delta = o(1) $ when $ y_0 \rightarrow 0 $. 

Indeed, for any given $ \lambda \in (0,\delta_0) $, choose some $ -\lambda < \delta_1 < \delta_2 < \lambda $ such that the Lazutkin perimeters of $ \gamma_{\delta_1} $ and $ \gamma_{\delta_2} $ are different. Then, for
small enough $ y_0 $ and any $ \delta \in [\delta_1,\delta_2] $ consider the billiard ball trajectory $ \tau(\delta) $ with bouncing points $ x_0(\delta) = \gamma_\delta(0), x_1(\delta), \ldots, x_k(\delta) $ on $ \gamma_\delta $, with angle $ y_0 $ at the first bouncing point $ x_0(\delta) $. Here $ k = k(\delta,y_0) $ is the last bouncing point before the trajectory escapes $ \gamma_\delta $. From Lemma \ref{lazutkin-coordinates-orbits} it follows that we have $ \lim_{y_0 \rightarrow 0} y_0 k(\delta,y_0) = 
L_\delta $, where $ L_\delta $ is the Lazutkin perimeter of $ \gamma_\delta $. Since $ L_{\delta_1} \neq L_{\delta_2} $, for small enough $ y_0 $ we get $ k(\delta_1,y_0) \neq k(\delta_2,y_0) $. Hence the function $ \delta \mapsto k(\delta,y_0) $ has a discontinuity point $ \delta_3 \in [\delta_1,\delta_2] $, at which we must have $ x_k(\delta_3) = \gamma_{\delta_3}(a/2) $, where $ k = k(\delta_3,y_0) $. This completes the proof of the lemma.

\end{document}